\newcommand{\bal}[1] {\ensuremath{\left(\begin{array}{#1}}}
\newcommand{\ear} {\ensuremath{\end{array}\right)}}
\newcommand{\bals}[1] {\ensuremath{\left[\begin{array}{#1}}} % Begin Array Left Square
\newcommand{\ears} {\ensuremath{\end{array} \right] }} % End Array Right Square
\DeclareMathOperator{\diag}{diag}
\newcommand{\one} {\ensuremath{\mathds{1} }} % Vector of ones
\let\leq\leqslant
\let\geq\geqslant
\newcommand{\calE}{\ensuremath{\mathcal{E}}}
\newcommand{\calG}{\ensuremath{\mathcal{G}}}
\newcommand{\calH}{\ensuremath{\mathcal{H}}}
\newcommand{\calL}{\ensuremath{\mathcal{L}}}
\newcommand{\calV}{\ensuremath{\mathcal{V}}}
\newcommand{\bmat}{\begin{matrix}}
\newcommand{\emat}{\end{matrix}}
\newcommand{\bbm}{\begin{bmatrix}}
\newcommand{\ebm}{\end{bmatrix}}
\newcommand{\bpm}{\begin{pmatrix}}
\newcommand{\epm}{\end{pmatrix}}
\newcommand{\bse}{\begin{subequations}}
\newcommand{\ese}{\end{subequations}}
\newcommand{\beq}{\begin{equation}}
\newcommand{\eeq}{\end{equation}}
\newcommand{\ben}{\begin{enumerate}}
\newcommand{\een}{\end{enumerate}}
\newcommand{\beni}{\renewcommand{\labelenumi}{\roman{enumi}.}
\renewcommand{\theenumi}{\roman{enumi}}\begin{enumerate}}
\newcommand{\eeni}{\end{enumerate}\renewcommand{\labelenumi}{\arabic{enumi}.}
\renewcommand{\theenumi}{\arabic{enumi}}}
\newcommand{\bena}{\renewcommand{\labelenumi}{\alpha{enumi}.}
\renewcommand{\theenumi}{\alpha{enumi}}\begin{enumerate}}
\newcommand{\eena}{\end{enumerate}\renewcommand{\labelenumi}{\arabic{enumi}.}
\renewcommand{\theenumi}{\arabic{enumi}}}
\newcommand{\bit}{\begin{itemize}}
\newcommand{\eit}{\end{itemize}}
\newcommand{\R}{\ensuremath{\mathbb R}}
\tikzstyle{vertex}=[circle,fill=black!5,draw=black,minimum size=15mm]
\tikzstyle{terminal vertex} = [circle,fill=black!5,draw=black,minimum size=15mm]
\tikzstyle{edge} = [draw,thick,-]
\tikzstyle{edge2} = [draw,thick,->,red!50]
\tikzstyle{weight} = [font=\small]
\tikzstyle{selected edge} = [draw,line width=2pt,-,red!50]
\tikzstyle{ignored edge} = [draw,line width=5pt,-,black!20]
\tikzstyle{empty vertices} = [circle,fill=white!5]
\newtheorem{theorem}{Theorem}
\newtheorem{lemma}[theorem]{Lemma}
\newtheorem{example}{Example}
\newtheorem{remark}{Remark}
\title{Optimization of the $\mathcal{H}_\infty$-norm of Dynamic Flow Networks }
\author{Alexander Johansson, Jieqiang Wei, Henrik Sandberg, Karl H. Johansson and Jie Chen % <-this %
%stops a space
 \thanks{*This work is supported by Knut and Alice Wallenberg Foundation, Swedish Research Council, Swedish Foundation for Strategic Research and by Hong Kong Research Grants Council (CityU 11200415).}
 \thanks{A. Johansson, J. Wei, H. Sandberg and K.H. Johansson are with the ACCESS Linnaeus Centre, School of Electrical Engineering. 
 KTH Royal Institute of Technology,
 SE-100 44 Stockholm, Sweden. Emails:
         {\tt\small \{jieqiang, kallej\}@kth.se}}
         \thanks{Jie Chen is with the Department of Electronic Engineering, City University of Hong Kong,
Hong Kong, China.}
}
\begin{document}
\maketitle
\thispagestyle{plain}
\pagestyle{plain}

\begin{abstract}                          % Abstract of not more than 200 words.
In this paper, we study the $\calH_\infty \text{- norm}$ of linear systems over graphs, which is used to model distribution networks. In particular, we aim to minimize the $\calH_\infty \text{- norm}$ subject to allocation of the weights on the edges. The optimization problem is formulated with LMI (Linear-Matrix-Inequality) constraints. For distribution networks with one port, i.e., SISO systems, we show that the $\calH_\infty \text{- norm}$ coincides  with the effective resistance between the nodes in the port. Moreover, we derive an upper bound of the $\calH_\infty \text{- norm}$, which is in terms of the algebraic connectivity of the graph on which the distribution network is defined.
\end{abstract}

\iffalse

\begin{keywords}                           % Five to ten keywords,  
dynamic flow networks, $\calH_\infty \text{- norm}$, robustness, edge weight allocation, effective resistance, algebraic connectivity.             % chosen from the IFAC 
\end{keywords}   

\fi
 
\section{Introduction}

In this paper we study robustness of a basic model for the dynamics of a distribution
network. Identifying the network with a undirected graph we associate with every
vertex of the graph a state variable corresponding to {\it storage}, and with
every edge a control input variable corresponding to {\it flow}. Furthermore, some of the vertices
serve as terminals where an unknown flow may enter or leave the
network in such a way that the total sum of inflows and outflows is equal to
zero. Many control protocols are designed for a distributed control structure (the control input corresponding to a
given edge only depending on the difference of the state variables of the
adjacent vertices) which will ensure that the state variables associated to all
vertices will converge to the same value, i.e., reach consensus, \cite{Blanchini2000},\cite{Wei2013}.

In this paper, we consider the distribution network controlled by proportional controllers on the edges and study the robustness property with respect to the controller gain, i.e., the edges weights. In particular, we are interested in minimizing the $\calH_\infty \text{- norm}$ by allocating the edge weights. 

The distribution networks can be seen as linear time-invariant port-Hamiltonian systems \cite{vanderschaftmaschkearchive}, \cite{vanderschaftbook}, but also resides in the category of state-space symmetric systems \cite{WILLEMS1976,Ikeda1995,Qiu1996,Yang2001,Nagashio2005}. One important property of the state-space symmetric system is that its $\calH_\infty \text{- norm}$ is attained at the zero frequency \cite{TAN2001}, which is employed to solve the current problem.

The contributions of this paper are: The problem of minimizing the $\calH_\infty \text{- norm}$ of the distribution networks subject to the allocation of the edge weights is formulated and written with LMIs as constraints. Moreover, we give an interpretation of the Riccati inequality which regards definitness of a Laplacian to a graph containing both positive and negative weights on the edges. As a consequence of the interpretation, it is shown for distribution networks with one port, that the $\calH_\infty \text{- norm}$ (or induced  $\mathcal{L}_2 \text{- gain}$) is equal to the effective resistance between the nodes in the port. Then, an upper bound of the $\calH_\infty \text{- norm}$ is derived, which relates to the algebraic connectivity of the graph on which the distribution network is defined. The results in this paper can be relevant when designing robust multi-agent systems. In particular when considering a  malicious attacker, e.g.,  \cite{rai2012}.

The structure of the paper is as follows. Some preliminaries will
be given in Section \ref{s:preli}. The considered class of dynamic flow networks is given and the optimization problem is formulated in Section \ref{ProbForm}. The main results is presented in Section  \ref{distnet}. In Section \ref{Numex} there is a numerical example which demonstrates some results from this paper. Conclusions and future work are given in Section \ref{Concl} and \ref{Future}, respectively.

\textbf{Notation.}  A positive semi-definite (symmetric) matrix $M$ is denoted as $M\succcurlyeq 0$. A positive definite (symmetric) matrix $M$ is denoted as $M\succ 0$. The $i^{\text{th}}$ row of a matrix $M$ is given by $M_{i}$. The element on the $i^{\text{th}}$ row and $j^{\text{th}}$ column of a matrix $M$ is denoted $M_{ij}$. The vectors $e_1,e_2,\ldots,e_n$ denote the canonical basis of $\R^n$, whereas the vectors $\one_n$ and $\mathbf{0}_n$ represent a $n$-dimensional column vector with each entry being $1$ and $0$, respectively. We will omit the subscript $n$ when no confusion arises. The euclidean norm is denoted as $|\cdot |_2$, for a vector $x\in\R^n$, $|x|_2=(x_1^2+\dots+x_n^2)^{\frac{1}{2}}$.

\section{Preliminaries}\label{s:preli}

In this section, we briefly review some essentials about graph theory \cite{Bollobas98}, and give some definitions for robust analysis \cite{zhou1998essentials}.

\subsection{Graph Theory}\label{GraphT}

An undirected graph $\mathcal{G}=(\mathcal{W},\mathcal{V},\mathcal{E})$ consists of a finite set of nodes $\mathcal{V}=\{v_1,...,v_n\}$, a set of edges $\mathcal{E}=\{ \mathcal{E}_1,...,\mathcal{E}_m\}$ which contains unordered pairs of elements of $\calV$, and a set of corresponding edge weights $\mathcal{W}=\{w_1,...,w_m\}$. Graphs with unit weights, i.e., $w_i = 1$, for $i=1,...,m$, are denoted as $\mathcal{G}=(\mathcal{V},\mathcal{E})$. The set of neighbours to node $i$ is 
\begin{equation*}
N_i=\{v_j|(v_i,v_j)\in \mathcal{E} \}.
\end{equation*}
The graph Laplacian  $L\in \mathbb{R}^{n \times n}$ is defined component-wisely as
\begin{equation*}\label{Laplacian}
 L_{ij} =
  \begin{cases}
    \sum_{ j\in N_i} w_{ij} & \quad \text{if }   i=j,  \\
    -w_{ij}  & \quad \text{if } j\in N_i \setminus \{i\},\\
    0 & \quad \text{if } j\notin N_i.
  \end{cases}
 \end{equation*}
Given an orientation for each edge, the incidence matrix $B\in~\R^{n \times m}$ is defined as
\begin{equation*}\label{incid}
B_{ij}=\begin{cases}
    1 & \quad \text{if }   \mathcal{E}_j \ \text{starts in node} \ v_i,  \\
   -1  & \quad \text{if }   \mathcal{E}_j \ \text{ends in node} \ v_i,  \\
    0  & \quad \text{else}. 
  \end{cases}
\end{equation*}
These two matrices are related by $L=BWB^T$, where $W=~\diag(w_1,...,w_m)$. If $W\geq 0$ then the eigenvalues of $L_w$ can be structured as
\begin{equation*}
0=\lambda_1\leq\lambda_2\leq...\leq \lambda_n, 
\end{equation*}
where the eigenvector corresponding to $\lambda_1=~0$ is $\mathds{1}^\top=~[1,...,1]^T$. The second smallest eigenvalue, i.e., $\lambda_2$, is commonly referred to as the \emph{algebraic connectivity} \cite{Fiedler1973} and is a measure of how connected a graph is. Furthermore, if $\mathcal{G}$ is connected, then $\lambda_2>0$. 

If some weights are negative, the Laplacian can be decomposed as 
\begin{equation*}\label{Lapposneg}
L=L_{+}+L_{-}=B_{+}W_{+}B_{+}^T+B_{-}W_{-}B_{-}^T ,
\end{equation*}
where $B_{+}$ and $B_{-}$ are incidence matrices corresponding to the positive and negative sub-graphs, respectively. The matrices $W_{+}$ and $W_{-}$ are the weights of the positive and negative sub graphs, respectively. This decomposition is also used in e.g., \cite{CHEN2016}.

A measure of the connectivity between two nodes in $\mathcal{G}=~(\mathcal{W},\mathcal{V},\mathcal{E})$ is the \emph{effective resistance} \cite{bullo2014}. The effective resistance between the nodes $v_i$ and $v_j$ is defined as  
\begin{equation*}
R_{ij}=(e_i-e_j)^TL^{\dagger}(e_i-e_j),
\end{equation*}
where $L^{\dagger}$ is the Moore-Penrose pseudo inverse of $L$. 

\begin{lemma} [\cite{zelazo2014},Theorem III.3] \label{theodef}
Assume $\mathcal{G}=(\mathcal{W},\mathcal{V},\mathcal{E})$ has one edge with negative weight and the negative edge is $\mathcal{E}_{-}=(u,v)$. Let $\mathcal{G}_{+}$ be the positive sub-graph of $\mathcal{G}=~(\mathcal{W},\mathcal{V},\mathcal{E})$ and assume it is connected. Then $L(\mathcal{G})$ is positive semi-definite if and only if 
\begin{equation*}
|W(\mathcal{E}_{-})|\leq R^{-1}_{uv}(G_{+}),
\end{equation*}
where $W(\mathcal{E}_{-})$ is the negative weight and $R_{uv}$ denotes the effective resistance between node $u$ and $v$.
\end{lemma}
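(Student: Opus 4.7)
The plan is to exploit the rank-one perturbation structure. I would begin from the decomposition $L(\calG) = L(\calG_+) - |w_-|\, bb^T$, where $w_-$ is the single negative weight and $b := e_u - e_v$. Since $\calG_+$ is connected, $\ker L(\calG_+) = \spa\{\mathds{1}\}$; and because $b^T \mathds{1} = 0$, we also have $\mathds{1} \in \ker L(\calG)$. Thus $L(\calG) \succeq 0$ is equivalent to $x^T L(\calG) x \geq 0$ for every $x \in \mathds{1}^\perp$, a subspace on which $L(\calG_+)$ is strictly positive definite.

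Next I would rewrite the semi-definiteness condition on $\mathds{1}^\perp$ as the scalar estimate
\begin{equation*}
|w_-| \;\leq\; \inf_{\substack{x \in \mathds{1}^\perp \\ b^T x \neq 0}} \frac{x^T L(\calG_+) x}{(b^T x)^2},
\end{equation*}
and identify this infimum with $1/\bigl(b^T L(\calG_+)^\dagger b\bigr)$. Since $b \in \mathds{1}^\perp = \im L(\calG_+)$, the change of variables $y = L(\calG_+)^{1/2} x$ on $\mathds{1}^\perp$ together with Cauchy--Schwarz gives
\begin{equation*}
(b^T x)^2 \;\leq\; \bigl(b^T L(\calG_+)^\dagger b\bigr)\bigl(x^T L(\calG_+) x\bigr),
\end{equation*}
with equality attained at $x = L(\calG_+)^\dagger b$. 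Combined with the definition $R_{uv}(\calG_+) = b^T L(\calG_+)^\dagger b$, this yields both directions: if $|w_-| \leq R_{uv}^{-1}(\calG_+)$ the inequality above certifies $x^T L(\calG) x \geq 0$ for all $x \in \mathds{1}^\perp$, while if $|w_-| > R_{uv}^{-1}(\calG_+)$ the extremizer $x = L(\calG_+)^\dagger b$ witnesses $x^T L(\calG) x < 0$.

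The main obstacle is to carry out the variational identity cleanly on $\mathds{1}^\perp$, where only the pseudo-inverse is available; a useful alternative is a generalized Schur-complement argument after restricting both matrices to $\mathds{1}^\perp$, which converts $L(\calG) \succeq 0$ directly to the scalar condition $1 - |w_-|\, b^T L(\calG_+)^\dagger b \geq 0$ without any explicit infimum computation.
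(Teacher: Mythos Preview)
The paper does not prove this lemma at all: it is quoted verbatim as Theorem~III.3 of \cite{zelazo2014} and used later as a black box, so there is no in-paper argument to compare against. Your proposal is a correct and self-contained proof. The rank-one decomposition $L(\calG)=L(\calG_+)-|w_-|\,bb^\top$ with $b=e_u-e_v$, the reduction to $\mathds{1}^\perp$, and the Cauchy--Schwarz identification of the infimum with $1/(b^\top L(\calG_+)^\dagger b)=R_{uv}(\calG_+)^{-1}$ are exactly the standard route to this result; the Schur-complement alternative you mention is equally valid and is in fact closer in spirit to how the cited reference argues.
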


\subsection{$\mathcal{L}_2$-Norm and induced $\mathcal{L}_2$-Gain}

In this subsection, we recall some definitions from robust control. The notations used in this paper are fairly standard and are consistent with \cite{zhou1998essentials}, \cite{RANTZER2015}.
The space of square-integrable signals $f:[0,\infty)\rightarrow \mathbb{R}^n$ is denoted by $L_2[0,\infty)$.
For the linear time-invariant system 
\begin{align}\label{e:linear-sys}
\dot{x} & = Ax+Bu, \\ \nonumber
y & = Cx+Du,
\end{align}
the transfer matrix is $\mathbb{G}(s)=C(sI-A)^{-1}B+D$, which has the impulse response
\begin{equation*}
g(t)~=\calL^{-1}\{\mathbb{G}(s)\}~=~Ce^{At}B\mathbf{1}_+(t)+D\delta(t),
\end{equation*}
where $\delta(t)$ is the unit impulse and $\mathbf{1}_+(t)$ is the unit step defined as 
\begin{align}\nonumber
\mathbf{1}_+(t) = \begin{cases}
1, t\geq 0,\\
0, t<0.
\end{cases}
\end{align}  
If $x(0)=0$, then we have
$
y(t) = \int_{0}^t g(t-\tau) u(\tau) d\tau.
$
Then the induced $\mathcal{L}_2 \text{- gain}$ is defined as
\begin{align}\nonumber
\|g\|_{2-ind} = \sup_{u\in L_2[0, \infty)}\frac{\|y\|_2}{\|u\|_2} = \sup_{u\in L_2[0, \infty)}\frac{\|g*u\|_2}{\|u\|_2},
\end{align}
where 
$\|u(t)\|_2 =\Big(\int_{0}^{\infty}|u(t)|_2^2 dt\Big)^{\frac{1}{2} }.$

This induced $\mathcal{L}_2 \text{- gain}$, i.e., $\|g\|_{2-ind}$ or $\|\mathbb{G}\|_{2-ind}$, is often called the $\calH_\infty \text{- norm}$, denoted as $\|\mathbb{G}\|_\infty$.  It is well-know that 
$\|\mathbb{G}\|_\infty= \sup_{\omega\in\mathbb{R}}\bar{\sigma}\{\mathbb{G}(j\omega)\}$,
where $\bar{\sigma}(A)$ denote the largest singular value of the matrix $A$.

For the system (\ref{e:linear-sys}) with $D=0$, the bounded real lemma \cite{zhou1998essentials} implies that  $\|\mathbb{G}\|_\infty \leq \gamma$ if and only if 
there exists $P~=~P^\top \succ 0$ such that 
\begin{align}\label{flow2feb22}
PA+A^\top P+C^\top C+ \frac{1}{\gamma^2}PBB^\top P \preccurlyeq 0.
\end{align}

\section{Problem Formulation}\label{ProbForm}

We consider the dynamical distribution network defined on a graph $\calG=\{\calV,\calE\}$ with $|\calV|=n$ and $|\calE|=m$. On the vertices, we consider integrators, given as
\begin{align}\label{e:plant}
\dot{x} & =  u, & x,u \in \mathbb{R}^n, \\ \nonumber
z & =  x ,& z \in \mathbb{R}^n.
\end{align}
Here the $i^{\text{th}}$ element of $x$ and $u$, i.e. $x_i$ and $u_i$, are the state
and input variables associated with the $i^{\text{th}}$ vertex of the graph. System \eqref{e:plant} defines a port-Hamiltonian
system \cite{vanderschaftbook}, satisfying the energy-balance
\begin{equation*}
\frac{d}{dt}\frac{|x|_2^2}{2} =  u^T z.
\end{equation*}

As a next step we will extend the dynamical system (\ref{e:plant}) with an external input
$d$ of \emph{inflows and outflows}
\begin{equation}\label{e:plant+disturbance} \nonumber
\begin{aligned}
\dot{x} & =  u + Ed,& \quad & d \in\mathbb{R}^k, \\[2mm]
z & =  x, & \quad &
\end{aligned}
\end{equation}
where $E$ is a $n \times k$ matrix whose columns consist of one element which is $1$ (inflow) and one element $-1$ (outflow), while the rest of the elements are zero. A port is a set of nodes(terminals) to where the external flow which enter and leave the network sums to zero. Thus, $E$ specifies $k$ ports.

To achieve a state consensus, many controllers which provide the flows on the edges of $\calG$ have been proposed, with the following general form
\begin{equation}\label{e:controller general} 
\begin{aligned} 
\dot{\eta}_k &= f_k(\eta_k,\zeta_k), &\\
\mu_k &= g_k(\eta_k,\zeta_k),& \quad k=1,2,\ldots,m
\end{aligned}
\end{equation}
where $\eta_k,\zeta_k,\mu_k$ are respectively the states, input and output of the controller on the $k^{\text{th}}$ edge of $\calG$. Denote the stacked vectors of $\eta_k,\zeta_k,\mu_k$ as $\eta,\zeta,\mu$ respectively. With the controller \eqref{e:controller general}, the state variables $x_i, i=1,2,\ldots,n,$ are controlled by the controller output $\mu_k,k=1,2,\ldots,m,$ in the following manner 
\begin{equation}\label{e:F-connection}\nonumber
 u+BW\mu=0,
\end{equation}
where $B\in\R^{n\times m}$ is the incidence matrix of the digraph $\calG$, and $W$ is the diagonal matrix corresponding to the gain of the controller to the edges. In addition, the controller
is driven by the relative output of the systems \eqref{e:plant} on vertices, i.e
\begin{equation}\label{e:P-connection} \nonumber
 \zeta=B^Tz.
\end{equation}

It is known that, if $d=0$, the state agreement of the system \eqref{e:plant} can be achieved by P-control and PI-control. 
For the P-control, the closed-loop is, 
\begin{equation}\label{flowsys}
\begin{split}
 \dot{x}= & -L_wx+Ed,  \\
 y= & E^Tx,
 \end{split}
\end{equation}
where $y$ is a vector with the components being the state difference at each port.

%We consider a first-order multi-agent system with external inputs. The inputs can represent disturbances, loads, additional controller, etc. The multi-agent system is defined on a graph $\mathcal{G}~=~(\mathcal{V}, \ \mathcal{E}, \ \mathcal{W})$. The graph is assumed to be undirected and only contain positive weights. The LTI state-space representation of the multi-agent system is given as
%
%where $x \in \mathbb{R}^n$, $u \in \mathbb{R}^{p}$ and $y \in \mathbb{R}^{p}$ and $L\in~\mathbb{R}^{n \times n}$ is the graph Laplacian of $\mathcal{G}$. Moreover, we assume $\mathbbm{1}^TE=0$. This system is a Port-Hamiltonian system, see \cite{10}. The nodes belonging to the non-zero elements in each column of $E$ corresponds to a port, i.e., there are as many ports as columns in $E$. 

\begin{example} 
One physical interpretation of the system (\ref{flowsys}) is a basic model of a dynamic flow network, where there are water reservoirs on the nodes and pipes on the edges. The reservoirs are identical cylinders and the pipes are horizontal. The state $x$ is constituted by the water levels in the reservoirs and the pressures are proportional to the water levels. The flow in the pipes are passively driven by pressure difference between the reservoirs. The weights $ \mathcal{W}$ are representing the capacities of the pipes, in terms of diameter and friction. The passive flow from reservoir $i$ to reservoir $j$ is then $q_{ij}=~w_{ij}(x_i-x_j)$. The external input $d$ can e.g. be interpreted as flow in pumps which are distributing water inside the network. The output $y$ is then the difference between water levels of the reservoirs which the pumps are pumping to and the reservoirs which the pumps are pumping from.

Another physical interpretation of the system (\ref{flowsys}) is a mass-damper system, where there are masses on the nodes and dampers on the edges. The damping force is proportional to the relative velocity of the connected masses. The state $x$ is constituted by the momentum of the masses. The weights $ \mathcal{W}$ are representing the damping constants. The input $d$ is representing external forces, which are exposing some masses to push and some masses to pull. The total push is equal to the total pull. The output $y$ is the difference in momentum between the masses which are exposed to push and the masses which are exposed to pull.

There are many other interpretations and applications of the system (\ref{flowsys}). Others are e.g., chemical reaction networks \cite{schaftSIAM} and consensus protocols \cite{Saber2003}.  
\end{example}

In this paper we are interested in the following problem: For a given topology, how to achieve the best robust performance of the system \eqref{flowsys} by arranging the weights on the edges, i.e.,  
\begin{align}\label{oriopt}
\min_{W}  &\|\mathbb{G}\|_\infty  \\ \nonumber
s.t.,    & \sum w_i =c, \ w_i \geq 0,
    \end{align}
where $\mathbb{G}$ is the transfer function of the system (\ref{flowsys}), $W=~\diag(w_1,...,w_m)$ and $w_i$, for $i=1,...,m$, are the weights on the edges. The constant $c$ is the constraint on the sum of all edge weights.

\begin{example}[flow network continued]
For the flow network interpretation of the system \eqref{flowsys}, the optimization problem above is to allocate capacities of the water pipes such the $\calH_\infty \text{- norm}$ of the flow network is minimized. The constant $c$ represents the total capacity of the pipes.  
\end{example}

\medskip 

\section{$\mathcal{H}_\infty$-norm of the distribution network}\label{distnet}

\subsection{Optimization problem reformulated with LMI constraints}\label{LMIre}

We start this subsection by reformulating problem \eqref{oriopt} as an equivalent optimization problem with LMIs as constraints, which can then be efficiently solved numerically using, e.g., with Yalmip \cite{Lofberg2004}. 

\begin{theorem}\label{thm:main}
Consider the system (\ref{flowsys}). If the $\calH_\infty \text{- norm}$ is less than or equal to $\gamma$, then the following LMI is satisfied,
\begin{equation}\label{newLMI}
\begin{bmatrix}
L_w & E \\
E^\top & \gamma I_k
\end{bmatrix}\succcurlyeq  0.  
\end{equation}
\end{theorem}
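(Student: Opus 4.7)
My plan is to exploit the state-space symmetry of the realization in \eqref{flowsys}: with $A = -L_w$ symmetric, $B = E$, $C = E^\top$, and $D = 0$, the transfer matrix is
$$\mathbb{G}(s) = E^\top(sI + L_w)^{-1}E.$$
Although $-L_w$ is only negative semi-definite, its kernel $\spa\{\mathbf{1}\}$ is both uncontrollable and unobservable since $\mathbf{1}^\top E = 0$, so on $\mathbf{1}^\perp$ the realization is a stable symmetric one in the sense of \cite{TAN2001}, and hence $\|\mathbb{G}\|_\infty$ is attained at $s = 0$. I would therefore first identify $\mathbb{G}(0) = E^\top L_w^{\dagger} E$, using that the columns of $E$ lie in $\im(L_w) = \mathbf{1}^\perp$ for a connected graph. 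Since $\mathbb{G}(0)$ is symmetric positive semidefinite, the bound $\|\mathbb{G}\|_\infty \leq \gamma$ becomes the matrix inequality
$$\gamma I_k - E^\top L_w^{\dagger} E \succcurlyeq 0.$$

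I would then close the argument with a generalized Schur complement: since $L_w \succcurlyeq 0$ and $\im(E) \subseteq \im(L_w)$, the block matrix in \eqref{newLMI} is PSD if and only if its Schur complement $\gamma I_k - E^\top L_w^{\dagger} E$ is PSD. Composing these equivalences delivers the stated implication (and, in fact, the converse as well, which matters for using \eqref{newLMI} as a constraint in the LMI formulation of \eqref{oriopt}).

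The main obstacle I anticipate is the singularity of $L_w$: the standard Schur complement assumes invertibility of the $(1,1)$ block, so I have to use the pseudoinverse version, and that in turn demands the range inclusion $\im(E) \subseteq \im(L_w)$. Verifying this relies on two structural facts baked into the setup: $\calG$ is connected, so $\ker L_w = \spa\{\mathbf{1}\}$, and each column of $E$ represents a single inflow-outflow pair summing to zero, so $\mathbf{1}^\top E = 0$. As a consistency check I have also considered applying the bounded real lemma \eqref{flow2feb22} with the symmetric ansatz $P = \gamma I_n$; after simplification the Riccati inequality collapses to $\gamma L_w \succcurlyeq EE^\top$, which is exactly the (ordinary) Schur complement of \eqref{newLMI}, confirming the conclusion by a different route.
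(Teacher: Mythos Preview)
Your argument is correct and follows essentially the same route as the paper: reduce to a stable symmetric realization by factoring out the $\spa\{\mathbf{1}\}$ mode, invoke \cite{TAN2001} to obtain $\|\mathbb{G}\|_\infty = \bar\sigma(\mathbb{G}(0))$, and then apply a Schur complement. The only cosmetic difference is that the paper carries out the reduction by explicitly diagonalizing $L_w$ (so that $E^\top U_2^\top \hat\Lambda^{-1} U_2 E$ appears in place of your $E^\top L_w^{\dagger} E$ and the ordinary Schur complement suffices), whereas you package the same computation via the pseudoinverse and the generalized Schur complement; your consistency check with $P=\gamma I_n$ is exactly the content of the paper's subsequent theorem.
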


\begin{proof}
Denote 
\begin{align*}
U^\top=~[\mathds{1}_n, u^\top_2, \ldots, u^\top_n] \ \text{and} \ U_2^\top~=~[u^\top_2, \ldots, u^\top_n],
\end{align*}
for which 
$U L_w U^\top =~\diag(0, \lambda_2, \ldots,\lambda_n)=:~\Lambda$. Denote $\hat{\Lambda}=~\diag(\lambda_2,\ldots,\lambda_n)$. Then the system (\ref{flowsys}) has equal $\calH_\infty \text{- norm}$ as the system
\begin{align*}
\dot{\tilde{x}} & = - \Lambda \tilde{x} + UE d, \\
z & = E^\top U^\top \tilde{x}.
\end{align*}
Notice that the first row of $UE$ is zero, thus the $\calH_\infty \text{- norm}$ of the system (\ref{flowsys}) equals the $\calH_\infty \text{- norm}$ of the system
\begin{equation}\label{e:flow_P_mini}
\begin{aligned}
\dot{\hat{x}} & = - \hat{\Lambda} \hat{x} + U_2E d, \\
z & = E^\top U_2^\top \hat{x}.
\end{aligned}
\end{equation} 
Due to symmetry of the system and by Theorem 6 in \cite{TAN2001}, the $\calH_\infty \text{- norm}$ of the system  (\ref{e:flow_P_mini}) is $\|E^\top U_2^\top \hat{\Lambda}^{-1} U_2E \|_{2}$. The $\calH_\infty \text{- norm}$ of the system \eqref{flowsys} is then less or equal to $\gamma$ if and only if
\begin{equation} \nonumber
\|E^\top U_2^\top \hat{\Lambda}^{-1} U_2E \|_{2} \preccurlyeq \gamma.
\end{equation}
By the property of real symmetric matrix, we can further rewrite the previous constrain as $E^\top U_2^\top \hat{\Lambda}^{-1} U_2E \preccurlyeq \gamma I_{k}$. By Schur complement, we have
\begin{align} \nonumber
\begin{bmatrix}
\hat{\Lambda} & U_2 E \\
E^\top U_2^\top & \gamma I_{k}
\end{bmatrix} \succcurlyeq 0,
\end{align}
which is equivalent to 
\begin{equation} \nonumber
\begin{bmatrix}
\Lambda & UE \\
E^\top U^\top & \gamma I_{k}
\end{bmatrix} \succcurlyeq 0. 
\end{equation}
By pre and post multiplication of matrix $\diag(U^\top, I_k)$ and $\diag(U, I_k)$, respectively, the previous inequality is transformed to
\begin{equation} \nonumber
\begin{bmatrix}
L_w & E \\
E^\top & \gamma I_k
\end{bmatrix} \succcurlyeq 0.
\end{equation}
Then the conclusion follows.
\end{proof}

\begin{remark}
By Theorem \ref{thm:main}, the optimization problem (\ref{oriopt}) is equivalent to 
\begin{equation}\label{optnew}
\begin{aligned}
\min_W & \quad \gamma \\
s.t., & \ \ 
\begin{bmatrix}
L_w & E \\
E^\top & \gamma I_k
\end{bmatrix} \succcurlyeq 0, \\
& \ \ \sum w_i = c, \ w_i \geq 0.
\end{aligned}
\end{equation}
Since the constraints are LMIs, this optimization problem can efficiently be solved with Yalmip. The set up above is used later in Section \ref{Numex}, there the optimal edge weight allocation is determined for the system which is illustrated in Figure \ref{fig:ex1} and the optimal $\calH_\infty \text{- norm}$ is verified in a simulation. 
\end{remark}

In Theorem \ref{thm:main}, we proved that the inequality \eqref{newLMI} is satisfied if the $\calH_\infty \text{- norm}$ is less than or equal to $\gamma$. Moreover, by the bounded real lemma we have that if $\|\mathbb{G}\|_\infty\leq \gamma$, there exists $P~=~P^\top \succ 0$ such that 
\begin{equation}\label{flow2feb}
 -PL_w-L_w^TP+EE^T+ \frac{1}{\gamma^2}PEE^TP \preccurlyeq 0.
\end{equation}
In the next result, we provide one explicit solution to \eqref{flow2feb}.

\begin{theorem}
Consider the system (\ref{flowsys}). If the  $\calH_\infty \text{- norm}$ is less then or equal to $\gamma$, then $P=\gamma I$ is a solution to the Riccati inequality \eqref{flow2feb}. 
\end{theorem}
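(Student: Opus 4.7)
The plan is to substitute $P = \gamma I$ into the Riccati inequality and reduce it to a statement that is already provided by Theorem \ref{thm:main} via a Schur complement argument.

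First, I would exploit the fact that $L_w$ is symmetric (as a weighted Laplacian) to simplify the Riccati expression. Plugging $P = \gamma I$ into the left-hand side of \eqref{flow2feb} gives
\begin{equation*}
-\gamma L_w - \gamma L_w^\top + EE^\top + \frac{1}{\gamma^2}(\gamma I)EE^\top(\gamma I) = -2\gamma L_w + 2EE^\top,
\end{equation*}
so the inequality to establish collapses to $\gamma L_w - EE^\top \succcurlyeq 0$.

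Next, I would connect this to the LMI already proven in Theorem \ref{thm:main}. Since $\|\mathbb{G}\|_\infty\leq \gamma$, that theorem yields
\begin{equation*}
\begin{bmatrix} L_w & E \\ E^\top & \gamma I_k \end{bmatrix} \succcurlyeq 0.
\end{equation*}
For $\gamma > 0$ the block $\gamma I_k$ is invertible, so the Schur complement characterization of positive semidefiniteness implies $L_w - \frac{1}{\gamma} EE^\top \succcurlyeq 0$. Multiplying through by $\gamma > 0$ yields precisely $\gamma L_w - EE^\top \succcurlyeq 0$, which is what the reduction in the previous step required. Combining the two steps gives $-2\gamma L_w + 2EE^\top \preccurlyeq 0$ and hence verifies \eqref{flow2feb} with $P = \gamma I$.

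The only small subtlety is the degenerate case $\gamma = 0$; but if the $\mathcal{H}_\infty$-norm vanishes then $E = 0$ and the inequality holds trivially, so the argument above covers the interesting case $\gamma > 0$. The proof is essentially a direct substitution and an application of Schur complement to Theorem \ref{thm:main}, so there is no real obstacle; the observation to make is simply that with $P$ chosen proportional to the identity, the quadratic term $\frac{1}{\gamma^2}PEE^\top P$ cancels exactly against the mismatch between the Riccati form and the LMI form, reducing everything to a single linear matrix inequality already established.
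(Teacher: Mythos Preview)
Your proof is correct and follows essentially the same approach as the paper: substitute $P=\gamma I$ into \eqref{flow2feb}, reduce to $-L_w+\frac{1}{\gamma}EE^\top\preccurlyeq 0$, and obtain this from the LMI \eqref{newLMI} via Schur complement. You add a bit more detail (the explicit factor of $2$ and the degenerate case $\gamma=0$), but the argument is the same.
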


\begin{proof} 
If $\|\mathbb{G}\|_\infty\leq \gamma$, then by the Schur complement, the LMI (\ref{newLMI}) is equivalent to
\begin{equation}\label{LMI94}
-L_w+\frac{EE^T}{\gamma}\preccurlyeq 0.
\end{equation}
Furthermore, notice that by choosing $P=\gamma I$, the Riccati inequality \eqref{flow2feb} is equivalent to \eqref{LMI94}. Hence the conclusion follows.
%\begin{equation}
% -PL_w-L_w^TP+EE^T+ \frac{1}{\gamma^2}PEE^TP\leq0.
%\end{equation}
%
%Since (\ref{flow2feb}) is equivalent to (\ref{LMI94}) for , it is clear that $P=\gamma^*I$ and $\gamma=\gamma^*$ is a solution to (\ref{flow2feb}). Moreover, the smallest value of $\gamma$ which satisfy (\ref{LMI94}) is the induced $\mathcal{L}_2$-gain. Hence, the restriction $P=\gamma I$ is not conservative. 
\end{proof}

\medskip

%\subsection{Connection between induced $\mathcal{L}_2$-gain and algebraic connectivity}
%
%In this section we discuss the relationship between induced $\mathcal{L}_2$-gain and algebraic connectivity for multi-agent systems in the class (\ref{flowsys}). 
%
%
%\begin{theorem}\label{theo1}
%Consider a multi-agent system, as in (\ref{flowsys}). An upper bound of the induced $\mathcal{L}_2$-gain from the external input $u(t)$ to the output $y(t)$ is 
%\begin{equation}\label{mainresfeb}
% \gamma=\frac{     \bar{\lambda}_{EE^T}    }{\lambda_2},
%\end{equation}
%
%where $\lambda_2$ is the second smallest eigenvalue of the weighted Laplacian $L_w$ and  $\bar{\lambda}_{EE^T}$ is the largest eigenvalue of $EE^T$.
%\end{theorem}
%
%
%
%\begin{proof} Easily shown by using that $Lv=0$ and $E^Tv=0$ and by applying the Courant-Fischer principle (e.g., \cite{mohar1991} and \cite{bellman1997}) on inequality (\ref{LMI94}). 
%
%\end{proof}
%
%The induced $\mathcal{L}_2$-gain is then upper bounded by the algebraic connectivity. Hence, our optimization problem (\ref{oriopt}) is related to the optimization problem in \cite{Fiedler1990}, where the author optimize the algebraic connectivity with respect to edge weights. 
%
%
% Inequality (\ref{LMI94})  will be used in Sections~\ref{Spectral} and \ref{NegativeWeights} when deriving upper bounds of the induced $\mathcal{L}_2$-gain for multi-agent systems in the class (\ref{flowsys}).

%\subsection{Induced $\mathcal{L}_2$-Gain Relates to Effective Resistance}\label{NegativeWeights}

\subsection{Graphical interpretation of the Riccati inequality for dynamic flow networks}\label{ss:graphical interpretation}

In this subsection, we give a graphical interpretation of the Riccati inequality \eqref{newLMI} which is equivalent to \eqref{LMI94} (by Schur complement), for a special type of dynamic flow networks. More precisely, 
we assume that each column of $E$ has exactly two non-zero elements, one is $1$ and the other is $-1$. By this restriction of $E$, it has the structure of an incidence matrix and $EE^T$ is therefore a Laplacian. For $\gamma> 0$, let us define 
\begin{equation}\nonumber
L_\gamma = - \frac{1}{\gamma} EE^T,
\end{equation}
and denote the corresponding graph as $\mathcal{G}_\gamma~=~(\mathcal{W}_\gamma,\mathcal{V}_\gamma,\mathcal{E}_\gamma)$, where $\mathcal{W}_\gamma=\{-\frac{1}{\gamma},...,-\frac{1}{\gamma}\}$ and $\mathcal{V}_\gamma=\mathcal{V}$. The set of edges $\mathcal{E}_\gamma$ is determined by $E$. Recall that $\mathcal{G}=(\mathcal{W},\mathcal{V},\mathcal{E})$ is the graph on which system (\ref{flowsys}) is defined. Moreover, we define $\tilde{L}=L_w+L_\gamma $, which is a Laplacian with both positive and negative weights on the edges. The inequality (\ref{LMI94}) then equals to $\tilde{L} \succcurlyeq 0. $ 
Hence the $\calH_\infty \text{- norm}$ of system \eqref{flowsys} coincides with the largest magnitude of the negative weights $\frac{1}{\gamma}$, which yields a positive definite Laplacian $\tilde{L}$. Notice that it is possible for $\tilde{L}$ to have negative weights.

\begin{example}\label{ex}
The connection between $\mathcal{G}=(\mathcal{W},\mathcal{V},\mathcal{E})$ and $\mathcal{G}_\gamma=(\mathcal{W}_\gamma,\mathcal{V}_\gamma,\mathcal{E}_\gamma)$ is illustrated in this example. Consider a system as in (\ref{flowsys}), where  

\begin{align}
\nonumber L_w & = \begin{bmatrix} 
       w_{12}+w_{13} & -w_{12} & -w_{13} & 0\\[0.3em]
      -w_{12} & w_{12}+w_{24} & 0 & -w_{24} \\[0.3em]
       -w_{13} & 0 &  w_{13}+w_{34} & -w_{34} \\[0.3em]
        0 & -w_{24} & -w_{34} & w_{24}+w_{34} 
     \end{bmatrix}, \\ \label{exampleL}
     \ E^T & =\begin{bmatrix} 
         1 & 0 & 0 & -1\\[0.3em]
         1 & -1 & 0& 0 
         \end{bmatrix}.
       \end{align}

\begin{figure}
\centering
\begin{tikzpicture}[scale=0.7, auto,swap]
  
\foreach \pos/\name in {{(-2,0)/2}, {(2,0)/3}}
        \node[vertex] (\name) at \pos {$\name$};
  
\foreach \pos/\name in {{(0,2.5)/1}, {(0,-2.5)/4}}
     \node[terminal vertex] (\name) at \pos {$\name$};
  
\foreach \pos/\name in {{(0,5)/5}, {(0,-5)/6},{(1.5,4.5)/7},{(-3.5,-2.0)/8}}
     \node[empty vertices] (\name) at \pos {};
  
\foreach \source/ \dest /\weight in {1/2/w_{12}, 1/3/w_{13},2/4/w_{24},3/4/w_{34}}
      \path[edge] (\source) -- node[weight] {$\weight$} (\dest);
  
\foreach \source/ \dest /\weight in {5/1/d_1, 4/6/d_1,7/1/d_2,2/8/d_2}
      \path[edge2]  (\source) -- node[weight] {$\weight$} (\dest);
\end{tikzpicture}

 \caption{The graph on which the system (\ref{exampleL}) is defined. The external inputs to the system, i.e., $d_1$ and $d_2$, is also marked. The output from the system is $y_1=x_1-x_4$ and $y_2=x_1-x_2$. The ports of this system are $(d_1,y_1)$ and $(d_2,y_2)$.}
    \label{fig:ex1}
\end{figure}
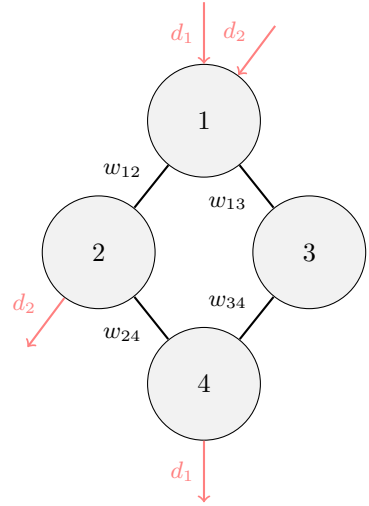

This dynamic flow system is defined on the graph $\mathcal{G}$, which is illustrated in Figure \ref{fig:ex1}. For this system,  the graph $\mathcal{G}_\gamma$ which corresponds to $L_\gamma$ is illustrated in Figure \ref{fig:ex2}. The induced $\mathcal{L}_2 ~\text{- gain}$ from $d=[d_1, d_2]^T$ to $y~=~[x_1-~x_4, x_1-x_2]^T$ is upper bounded by the largest magnitude of the weights $-\frac{1}{\gamma}$ which yields a positive definite $\tilde{L}$.

\begin{figure}
\centering
\begin{tikzpicture}[scale=0.7, auto,swap]
  
  \foreach \pos/\name in {{(-2,0)/2}, {(2,0)/3}}
        \node[vertex] (\name) at \pos {$\name$};
  
  \foreach \pos/\name in {{(0,2.5)/1}, {(0,-2.5)/4}}
     \node[terminal vertex] (\name) at \pos {$\name$};
  
  \foreach \pos/\name in {{(0,5)/5}, {(0,-5)/6},{(1.5,4.5)/7},{(-3.5,-2.0)/8}}
     \node[empty vertices] (\name) at \pos {};
  
  \foreach \source/ \dest /\weight in {1/4/-\frac{1}{\gamma}, 1/2/-\frac{1}{\gamma}}
      \path[selected edge] (\source) -- node[weight] {$\weight$} (\dest);
  
 \end{tikzpicture}
\caption{For the system (\ref{exampleL}), which is illustrated in Figure \ref{fig:ex1}. The graph which corresponds to $L_\gamma=~-\frac{1}{\gamma} EE^T$ is illustrated in this figure. The induced $\mathcal{L}_2 ~\text{- gain}$ from $[d_1, d_2]^T$ to $[y_1, y_2]^T$ is upper bounded by the largest magnitude of $-\frac{1}{\gamma}$, which yields a positive definite $\tilde{L}=L_w+ L_\gamma $. }
 \label{fig:ex2}
\end{figure}
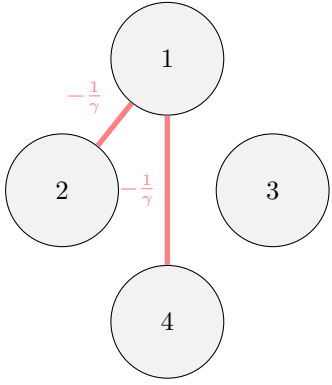
\end{example}

\subsection{Connection between $\calH_\infty \text{- norm}$ and effective resistance}\label{NegativeWeights}

In the previous subsection, we reinterpret the $\calH_\infty \text{- norm}$ of system \eqref{flowsys} in the scenario of the Laplacian $\tilde{L}$ with negative weights. Further conclusions can be drawn from the reasoning above about the definiteness of $\tilde{L}$ if we set the restriction to SISO case, i.e., $E=e_i-e_j$. The $\calH_\infty \text{- norm}$ is then shown to coincide with the effective resistance between node $i$ and $j$. 

%Earlier work provides conditions for when Laplacians, which contains both positive and negative edge weights, are positive definite \cite{zelazo2014}. 
%Their results are more general but in this paper we will only consider the case where there are only one port. i.e., there are only one negative edge. 

%Theorem \ref{theodef} is used to show that the induced $\mathcal{L}_2$-gain equals the effective resistance between node $i$ and $j$ if $E=e_i-e_j$.

\begin{theorem} 
Consider system (\ref{flowsys}) defined on $\mathcal{G}=~(\mathcal{W},\mathcal{V},\mathcal{E})$, which is undirected and only contains positive edge weights. Moreover, assume that there is one port, i.e., $d \in \mathbb{R}$ and $E=e_i-e_j$. Then the induced $\mathcal{L}_2$-gain from $d$ to $y$ is 
\begin{equation*}
\gamma={R_{ij}(L_w)},
\end{equation*}
where $R_{ij}$ denotes the effective resistance between the nodes in the port. Namely, nodes the $i$ and $j$. 
\end{theorem}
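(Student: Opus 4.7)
The plan is to reduce the claim to an application of Lemma \ref{theodef} by combining the LMI characterization from Theorem \ref{thm:main} with the graphical interpretation developed in Subsection \ref{ss:graphical interpretation}. First, I would invoke Theorem \ref{thm:main}: the $\calH_\infty$-norm of the system \eqref{flowsys} is less than or equal to $\gamma$ if and only if the block matrix in \eqref{newLMI} is positive semi-definite. Since $\gamma > 0$, Schur complement renders this condition equivalent to
\begin{equation*}
L_w - \frac{1}{\gamma} E E^\top \succcurlyeq 0.
\end{equation*}

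Next, I would specialize to the SISO setting $E = e_i - e_j$. Here $EE^\top = (e_i-e_j)(e_i-e_j)^\top$ is precisely the Laplacian of the single-edge graph on $\calV$ that connects $i$ and $j$ with unit weight. Therefore $\tilde L := L_w - \frac{1}{\gamma}(e_i-e_j)(e_i-e_j)^\top$ is the Laplacian of the graph obtained from $\calG$ by adjoining one extra edge $(i,j)$ with negative weight $-1/\gamma$, whose positive sub-graph is $\calG$ itself (assumed connected since $L_w$ admits a well-defined effective resistance). This is exactly the setting of Lemma \ref{theodef}, which tells us that $\tilde L \succcurlyeq 0$ if and only if
\begin{equation*}
\frac{1}{\gamma} \leq R_{ij}^{-1}(L_w), \qquad \text{i.e.,} \qquad \gamma \geq R_{ij}(L_w).
\end{equation*}

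Combining the two equivalences, $\|\mathbb{G}\|_\infty \leq \gamma$ holds precisely when $\gamma \geq R_{ij}(L_w)$. Taking the infimum over all admissible $\gamma$ yields $\|\mathbb{G}\|_\infty = R_{ij}(L_w)$, which is the claimed induced $\mathcal{L}_2$-gain. The only minor obstacle I foresee is checking the boundary case $\gamma = R_{ij}(L_w)$: one must verify that the pseudoinverse-based definition of $R_{ij}$ and the possibly singular nature of $L_w$ (with $\mathbf{1}$ in its kernel) do not affect the equivalence. This is not really an obstacle because $(e_i-e_j)^\top \mathbf{1} = 0$ ensures that $\tilde L$ still has $\mathbf{1}$ in its kernel, so positive semi-definiteness can be checked on the quotient by $\spa\{\mathbf{1}\}$, which is exactly what Lemma \ref{theodef} treats.
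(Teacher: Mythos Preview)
Your proposal is correct and follows essentially the same route as the paper: reduce via Theorem~\ref{thm:main} and Schur complement to the inequality $L_w - \tfrac{1}{\gamma}EE^\top \succcurlyeq 0$, recognize this as the Laplacian of $\calG$ augmented by one negative-weight edge, and apply Lemma~\ref{theodef}. Your treatment is somewhat more careful than the paper's (explicitly noting the equivalence direction in Theorem~\ref{thm:main}, taking the infimum, and addressing the $\mathbf{1}$-kernel issue), but the underlying argument is identical.
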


\begin{proof}
First note that (\ref{LMI94}) is composed by a positive and a negative graph Laplacian. The negative graph Laplacian has weights $-\frac{1}{\gamma}$. By Lemma \ref{theodef}, the inequality (\ref{LMI94}) is satisfied if and only if
\begin{equation*}
\frac{1}{\gamma}\leq R^{-1}_{ij}(L_w)\iff  \gamma\geq  R_{ij}(L_w).
\end{equation*}
\end{proof}

\section{$\mathcal{H}_\infty$-norm Bounded by Algebraic Connectivity}\label{Spectral}

In Sections \ref{ss:graphical interpretation} and \ref{NegativeWeights}, we showed that the $\calH_\infty \text{- norm}$ has explicit graphical interpretation for a special matrix $E$. In this section, we focus on general matrices $E$. Here we provide one preliminary result which relates the $\calH_\infty \text{- norm}$ of system (\ref{flowsys}) to the algebraic connectivity of the underlying graph. This result can be used if the location of the ports is unknown.

\begin{lemma}\label{theo1}
Consider system (\ref{flowsys}). Then, the $\calH_\infty \text{- norm}$ is bounded by

\begin{equation}\label{mainresfeb} \nonumber
 \gamma=\frac{     \bar{\lambda}_{EE^T}    }{\lambda_2},
\end{equation}
where $\lambda_2$ is the second smallest eigenvalue of the weighted Laplacian $L_w$ and  $\bar{\lambda}_{EE^T}$ is the largest eigenvalue of $EE^T$.
\end{lemma}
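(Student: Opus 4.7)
The plan is to reduce the $\mathcal{H}_\infty$-bound to a matrix inequality relating $L_w$ and $EE^\top$, and then verify that inequality by a spectral comparison on the orthogonal complement of $\mathds{1}$.

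First I would invoke Theorem \ref{thm:main} together with the Schur complement identity already used to derive \eqref{LMI94}: showing $\|\mathbb{G}\|_\infty \leq \gamma$ reduces to establishing $L_w - \frac{1}{\gamma} EE^\top \succcurlyeq 0$. Equivalently, substituting $P = \gamma I$ into the bounded real lemma \eqref{flow2feb22} collapses to the very same inequality, so either route may be used.

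Next comes the key structural observation: every column of $E$ contains exactly one $+1$ and one $-1$, so $\mathds{1}^\top E = 0$, and hence both $L_w$ and $EE^\top$ annihilate $\mathds{1}$. Consequently the inequality $L_w \succcurlyeq \frac{1}{\gamma} EE^\top$ holds trivially on $\mathrm{span}(\mathds{1})$ and need only be verified on $\mathds{1}^\perp$. On that subspace, assuming $\mathcal{G}$ is connected so that $\lambda_2 > 0$, the variational characterization of the algebraic connectivity gives $x^\top L_w x \geq \lambda_2 |x|_2^2$ for every $x \perp \mathds{1}$, while $x^\top EE^\top x \leq \bar{\lambda}_{EE^\top} |x|_2^2$ by the Rayleigh upper bound for $EE^\top$. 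Combining the two, with $\gamma = \bar{\lambda}_{EE^\top}/\lambda_2$, one obtains $x^\top L_w x \geq \lambda_2 |x|_2^2 \geq \frac{\lambda_2}{\bar{\lambda}_{EE^\top}}\, x^\top EE^\top x = \frac{1}{\gamma} x^\top EE^\top x$, which is the desired LMI.

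The only subtlety, and arguably the entire content of the argument, is the kernel alignment $\mathrm{span}(\mathds{1}) \subseteq \ker(L_w) \cap \ker(EE^\top)$: without this, the inequality would fail immediately in the $\mathds{1}$-direction because $L_w \mathds{1} = 0$ while $EE^\top \mathds{1}$ could be nonzero. Once the column-sum-zero structure of $E$ is noted, the rest is a one-line Courant--Fischer comparison and the lemma follows.
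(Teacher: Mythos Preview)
Your argument is correct and matches the paper's own (terse) proof: the paper likewise reduces to inequality \eqref{LMI94}, invokes $L_w\mathds{1}=0$ and $E^\top\mathds{1}=0$, and then applies the Courant--Fischer principle on $\mathds{1}^\perp$. Your write-up simply spells out the spectral comparison that the paper leaves implicit.
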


\begin{proof} 
The result is shown by using that $L\one=0$ and $E^T\one=0$ and by applying the Courant-Fischer principle (e.g., \cite{mohar1991} and \cite{bellman1960}) on inequality (\ref{LMI94}).
\end{proof}

\begin{remark}
By the previous lemma, maximizing the algebraic connectivity of the graph $\calG$ (with respect to the edge weights) is suboptimal to minimizing the $\calH_\infty \text{- norm}$, for given a $E$. This result can be relevant for design of robust systems when $E$ is unknown. This is e.g. the scenario if a malicious attacker is considered and the attacked nodes are unknown.

\end{remark}

\begin{example}
Consider the dynamic flow network (\ref{flowsys}) and the capacity of the pipes is to be allocated in order to minimize the $\calH_\infty \text{- norm}$ of the system, i.e., the optimization problem (\ref{oriopt}). However, the only information about $E$  which is available is the largest eigenvalue of $EE^T$, i.e., $\bar{\lambda}_{EE^T}$. Since full information about $E$ is not available, it is not possible to minimize the $\calH_\infty \text{- norm}$. Instead, by Lemma \ref{theo1}, we can minimize an upper bound by

\begin{align}\label{exopt} \nonumber
&\max_{W} \lambda_2(L_w)  \\ \nonumber 
& s.t., \sum\omega_i =c.
\end{align}

 This problem of maximizing algebraic connectivity with respect to the edge weights is well-studied, e.g.,  \cite{Fiedler1973} and \cite{Ghosh2008}.

\end{example}

\section{Numerical Example}\label{Numex}

In this section we will demonstrate the results from Section \ref{LMIre}. For this purpose the dynamic flow network in Example \ref{ex} (Figure \ref{fig:ex1}) is used. We aim to allocate capacities of the pipes in order to minimize the $\calH_\infty \text{- norm}$. The optimal allocation of the pipe capacity and the optimal $\calH_\infty \text{- norm}$ is determined numerically in Yalmip and the optimization set up (\ref{optnew}). 

The total pipe capacity is set to $c=1$. The optimal allocation of the pipe capacity is $w_{12}^*=~0.6$, $w_{24}^*=0.4$, $w_{13}^*=w_{34}^*=0$ and the optimal $\calH_\infty \text{- norm}$ is $\gamma^*=5$. The flow network with optimally allocated pipe capacities is seen in Figure \ref{fig:ex3}.

\begin{figure}
\centering
\begin{tikzpicture}[scale=0.7, auto,swap]
  
\foreach \pos/\name in {{(-2,0)/2}, {(2,0)/3}}
        \node[vertex] (\name) at \pos {$\name$};
  
\foreach \pos/\name in {{(0,2.5)/1}, {(0,-2.5)/4}}
     \node[terminal vertex] (\name) at \pos {$\name$};
  
\foreach \pos/\name in {{(0,5)/5}, {(0,-5)/6},{(1.5,4.5)/7},{(-3.5,-2.0)/8}}
     \node[empty vertices] (\name) at \pos {};
  
\foreach \source/ \dest /\weight in {1/2/0.6 ,2/4/0.4}
      \path[edge] (\source) -- node[weight] {$\weight$} (\dest);
  
\foreach \source/ \dest /\weight in {5/1/d_1, 4/6/d_1,7/1/d_2,2/8/d_2}
      \path[edge2]  (\source) -- node[weight] {$\weight$} (\dest);
\end{tikzpicture}

 \caption{Flow network (\ref{exampleL}) with the optimal allocated pipe capacities. I.e., $w_{12}=~0.6$,  $w_{24}=0.4$, $w_{13}=w_{34}=0$. The $\calH_\infty \text{- norm}$ corresponding to this allocation is   $\gamma^*=5$.}
    \label{fig:ex3}
\end{figure}
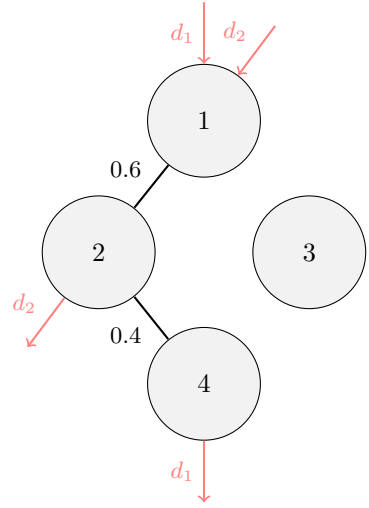

Next, we induce an input to the system in order to verify the $\calH_\infty \text{- norm}$. The input is

\begin{equation}\nonumber
d(t)=\begin{cases}
    [1,0]^T       & \quad \text{if }0\leq t < 1, \\
    [1,1]^T  & \quad \text{if } 1\leq t < 2,\\
    [0,0]^T  & \quad \text{if }2\leq t. \\
  \end{cases}
\end{equation}

In Figure \ref{fig:sim}, the $\mathcal{L}_2$-norm of the output, i.e. $||y(t)||_2$, is seen together with the $\mathcal{L}_2$-norm of the input, scaled with the optimal $\calH_\infty \text{- norm}$, i.e. $ \gamma^*||d(t)||_2$. In the figure it is seen that $||y(t)||_2 \leq \gamma^*||d(t)||_2$, hence the $\calH_\infty \text{- norm}$ is verified.

\begin{figure}
    \centering
    \includegraphics[scale=0.42]{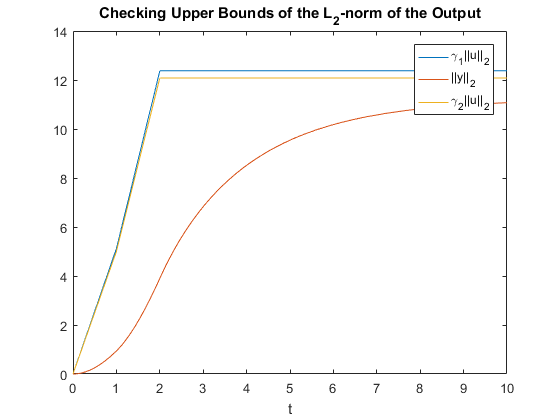}
    \caption{The $\mathcal{L}_2$-norm of the output, i.e., $||y(t)||_2$, is seen together with the $\mathcal{L}_2$-norm of the input scaled with the induced $\mathcal{L}_2 \text{- gain}$, i.e., $\gamma^* ||d(t)||_2$.  }
    \label{fig:sim}
\end{figure}

\section{Conclusions}\label{Concl}

For the dynamic flow networks which we have considered, we have derived an optimization set up with LMIs as constraints, which minimizes the $\calH_\infty \text{- norm}$ with respect to the allocation of the capacity of the pipes. Moreover, for the flow networks, we have interpreted the Riccati inequality as a definiteness criterion of a Laplacian to a graph containing both positive and negative weights on the edges. For flow networks which are SISO, more precisely, $E=e_i-e_j$, we have shown that the $\calH_\infty$-norm coincides with the effective resistance between node $i$ and  node $j$. Moreover, we have derived an upper bound of the induced  $\calH_\infty$-norm of the flow networks. This upper bound relates to the algebraic connectivity on which the flow network is defined. This upper bound can be relevant when full information about the input matrix, i.e $E$, is not available. Then, the capacities of the pipes can be allocated to get a suboptimal solution which bounds the $\calH_\infty \text{- norm}$.

\section{Future work}\label{Future}

A related future topic is the problem of minimizing the $\calH_\infty \text{- norm}$ of dynamic flow networks with respect to topology, more precisely, a limited amount of edges is to be allocated in a graph with fixed vertices. Another future topic is to consider a fixed graph (both topology and weights), but consider saturation of the flow on the edges. The problem is then to minimize the induced $\calL_2$-gain with respect to allocation of the saturation limits.

%\begin{ack}                               % Place acknowledgements
%Thanks  % here.
%\end{ack}

%bibliographystyle{plain}        % Include this if you use bibtex 
%\bibliography{autosam}           % and a bib file to produce the 
                                 % bibliography (preferred). The
                                 % correct style is generated by
                                 % Elsevier at the time of printing.

%\begin{thebibliography}{99}     % Otherwise use the  
                                 % thebibliography environment.
                                 % Insert the full references here.
                                 % See a recent issue of Automatica 
                                 % for the style.

\bibliographystyle{plain} % use IEEEtran.bst style
\bibliography{ref}% name of bib file

%\appendix
%\section{}    % Each appendix must have a short title.
%\section{}         % Sections and subsections are supported  

                                        % in the appendices.
\end{document}